\newtheorem*{theorem*}{Theorem}
\newtheorem*{corollary*}{Corollary}
\newtheorem*{definition*}{Definition}
\numberwithin{equation}{section}
\begin{document}
\title{\textbf{On non-maximal prime ideals of $C[0,1]$}}
\author{Vaibhav Pandey (vaibhav2011@niser.ac.in)\\ NISER, Bhubaneswar, India}
\date{}
\maketitle

\begin{abstract}
We first show a counter intuitive result that in the ring of real valued continuous functions on $[0,1]$ non maximal prime ideals exist. This is a standard proof and a well known result. Interestingly, a non maximal prime ideal in this ring is actually contained inside a unique maximal ideal. We arrive at this result merely by looking at the zero set of ideals in this ring and by making simple geometrical observations. We end by leaving the reader with an interesting open problem that logically follows from this article.
\end{abstract}

\section*{Introduction}

Consider the ring $R = C[0,1] = \{f:[0,1] \rightarrow \mathbb{R} : \textit{f is continuous}\}$ under pointwise addition and multiplication. Consider any two nonzero functions $f$ and $g$ in $R$ whose zero sets are complements of each other in $[0,1]$, then the product $fg$ is the zero function. So, $C[0,1]$ is not an integral domain.

Further, we know all the maximal ideals of $R$. All maximal ideals of $R$ are of the form $ M_{\gamma}$ for some $\gamma \in [0,1]$ where $M_{\gamma} = \{f \in R : f(\gamma) = 0\}$. Now, given the points $\gamma_1, \gamma_2 \in [0,1]$, consider the ideal $I = \{f \in R : f(\gamma_1) = f(\gamma_2) = 0 \}$. Is it a prime ideal? No. This is because the polynomial $(x-\gamma_1)(x-\gamma_2) \in I$ but both $x-\gamma_1$ and $x-\gamma_2$ do not individually belong to $I$.

\section*{Towards the question}

We ask if non maximal prime ideals exist in $R$ and if so where can we locate them. From the above discussion it appears that ideals vanishing at $2$ or more points are not prime ideals. This gives us a nice tool of associating with a given ideal $I$ of $R$, a subset of $[0,1]$ namely $V(I) = \bigcap_{f \in I} V(f)$ where $V(f) = f^{-1}(0)$. For example $V(M_{\gamma}) = \{\gamma\}$. Note that $V(I)$ is compact in $[0,1]$. Also note that if $I_1 \subset I_2$ then $V(I_2) \subset V(I_1)$. We first investigate the existence of non maximal prime ideals in $R$.    

\section*{An existential proof}

Since ideals $I$ with $|V(I)| = 2$ are not prime ideals, one would guess that ideals with $|V(I)| \geq 2$ are not prime ideals as well. Based on this, our intuition might tell us that $R$ has no non maximal prime ideals which would be rather interesting since $R$ is not a principal ideal domain (in fact not even an integral domain). However, we show the existence of non maximal prime ideals. But first a definition.

\begin{definition*}
A nonempty set $S$ is said to be multiplicative if $1 \in S$ and given any two elements in $S$, the product of these two elements also lies in $S$.
\end{definition*}

\textbf{Proof} - Let $S$ be the set of all polynomials in $C[0,1]$. Note that $S$ is a multiplicative set. Now consider all the ideals in $R$ with the property that they are disjoint from $S$. Call this set $A$ with the partial ordering of set inclusion. Note that $A$ is nonempty since the zero ideal belongs to it. Consider a chain $I_1 \subset I_2 \subset \ldots I_n \ldots$ of ideals in $A$. Then the ideal $\bigcup I_j$ is clearly an upper bound. By Zorn's Lemma, $A$ has a maximal element. Call it $P$.\\
We claim that $P$ is a prime ideal. If not, then there exist $a,b \in R$ (outside $P$) such that $ab \in P$. Consider the ideals $\langle P,a \rangle$ and $\langle P,b \rangle$. Both these ideals strictly contain $P$ and therefore must intersect with $S$. Hence, there exist $f,g \in R$ and $p,p' \in P$ such that $p+fa, p'+gb \in S$. As $S$ is multiplicative $(p+fa)(p'+gb) \in S$. Now $(p+fa)(p'+gb) = pp' + pgb + p'fa + fgab$. As $P$ is an ideal $pp', pgb, p'fa \in P$. By assumption $ab \in P$ so $fgab \in P$. Therefore, we get that $(p+fa)(p'+gb) \in P$. But this is a contradiction to the fact that $S \cap P = \phi$. Hence, $P$ is a prime ideal and the claim is proved.\\
We further claim that $P$ is not a maximal ideal. We prove this by contradiction. If $P$ is a maximal ideal then $P = M_{\gamma}$ for some $\gamma \in [0,1]$. Then the nonzero polynomial $p(x) = x - \gamma \in P$. But $P$ does not contain any nonzero polynomials. This contradiction proves our claim that $P$ is not a maximal ideal.

\section*{Locating non maximal prime ideals}

We now know that non maximal prime ideals do exist in $R$. We ask how does the zero set $V(P)$ of a non maximal prime ideal $P$ look like.

\begin{theorem*}
If $P$ is a prime ideal of $R$ then $V(P)$ cannot be the empty set or a set with more than $1$ element.
\end{theorem*}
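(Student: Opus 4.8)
The plan is to prove the two assertions separately: first that $V(P)$ is nonempty, and then that $V(P)$ contains at most one point. For the first part I would not use primeness at all, only that $P$ is a proper ideal. Suppose for contradiction that $V(P) = \emptyset$. Then for every $x \in [0,1]$ there is some $f_x \in P$ with $f_x(x) \neq 0$, and by continuity $f_x$ is nonzero on an open neighbourhood $U_x$ of $x$. Since $[0,1]$ is compact, finitely many such neighbourhoods $U_{x_1}, \dots, U_{x_n}$ cover $[0,1]$; then $h = f_{x_1}^2 + \cdots + f_{x_n}^2$ lies in $P$ and is strictly positive everywhere on $[0,1]$, so $1/h \in R$ and $h$ is a unit. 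Hence $P = R$, contradicting that $P$ is prime (in particular proper). (Alternatively, one may invoke that every proper ideal is contained in a maximal ideal $M_{\gamma}$, whence $\gamma \in V(M_{\gamma}) \subseteq V(P)$.)

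For the second part, suppose toward a contradiction that $V(P)$ contains two distinct points, which we may order as $\gamma_1 < \gamma_2$, and fix any $c$ with $\gamma_1 < c < \gamma_2$. Define $f, g \in R$ by $f(x) = \max(c - x, 0)$ and $g(x) = \max(x - c, 0)$. The key point — and the place where one must resist the misleading hint of the Introduction involving $(x - \gamma_1)(x - \gamma_2)$ — is that $fg$ is literally the zero function: at every $x \le c$ we have $g(x) = 0$, and at every $x \ge c$ we have $f(x) = 0$. Hence $fg = 0 \in P$, regardless of what $P$ is.

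Now primeness of $P$ forces $f \in P$ or $g \in P$. If $f \in P$, then $V(P) \subseteq V(f) = [c,1]$; but $\gamma_1 < c$, so $\gamma_1 \notin V(P)$, contradicting $\gamma_1 \in V(P)$. If instead $g \in P$, then $V(P) \subseteq V(g) = [0,c]$, and since $\gamma_2 > c$ we get $\gamma_2 \notin V(P)$, again a contradiction. Either way we reach a contradiction, so $V(P)$ cannot contain two distinct points; together with the first part, $V(P)$ is exactly a single point.

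As for difficulty, neither step is computationally heavy. The one genuinely essential idea is to notice that for a concrete prime ideal $P$ we cannot assume that polynomials such as $(x - \gamma_1)(x - \gamma_2)$ belong to $P$ — all we know is that members of $P$ vanish on $V(P)$ — so the contradiction must instead be extracted from a product that is identically zero and therefore automatically in $P$. The use of compactness of $[0,1]$ in the first part is likewise essential (the statement fails in $C(\mathbb{R})$), and I would flag it explicitly.
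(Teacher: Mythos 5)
Your proof is correct and takes essentially the same approach as the paper: the nonemptiness of $V(P)$ follows from containment in some $M_\gamma$ (your parenthetical alternative is exactly the paper's argument), and the two-point case uses the same key construction of two continuous functions with identically zero product, one nonvanishing at $\gamma_1$ and the other at $\gamma_2$. Your version with a single cut point $c$ and the contrapositive phrasing of primeness is a trivially reorganized form of the paper's argument with two cut points $x_1 < x_2$.
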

\begin{proof}
Since $R$ is a ring with $1$, any ideal $I$ is contained in a maximal ideal. So, $I \subset M_{\gamma}$ for some $\gamma \in [0,1]$. Then, $|V(I)| \geq |V(M_{\gamma})| = 1$. So, $|V(I)| = 0$ is not possible for any ideal $I$ of $R$, and hence in particular not possible for prime ideals of $R$.\\
Now consider the case when $|V(P)| \geq 2$. Let $\gamma_1, \gamma_2 \in V(P)$ with $\gamma_1 < \gamma_2$. Find points $x_1$ and $x_2$ in $(\gamma_1, \gamma_2)$ such that $x_1 < x_2$. Define 
\[ f(x) = \left\{
\begin{array}{ll}
      0 & x\leq x_2 \\
      x-x_2 &  x\geq x_2 \\
\end{array} 
\right. \] and 
\[ g(x) = \left\{
\begin{array}{ll}
      x-x_1 & x\leq x_1 \\
      0 &  x\geq x_1 \\
\end{array} 
\right. \] 
Then $f(\gamma_2) \neq 0$ and $g(\gamma_1) \neq 0$, so $f$ and $g$ do not belong to $I$ whereas $fg = 0$ belongs to $I$. Thus $I$ is not a prime ideal.
\end{proof}
Note that the above argument works equally well regardless of whether $V(P)$ is a finite set or an infinite set.

\section*{Conclusion}

Since non maximal prime ideals exist in $R$, from the above theorem it directly follows that for such an ideal $P$, $|V(P)| =1$ as all other possibilities are eliminated.
\begin{corollary*}
A non maximal prime ideal of $C[0,1]$ is (strictly)contained in a unique maximal ideal ($M_\gamma$). 
\end{corollary*}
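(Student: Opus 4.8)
The plan is to read the corollary off the theorem together with the two elementary facts recalled earlier in the excerpt: every maximal ideal of $R$ has the form $M_\gamma$ for some $\gamma \in [0,1]$, and $I_1 \subseteq I_2$ implies $V(I_2) \subseteq V(I_1)$. First I would let $P$ be a non-maximal prime ideal. Since $R$ is unital, $P$ is contained in some maximal ideal, so $V(P) \neq \emptyset$; combined with the theorem, which rules out $|V(P)| \geq 2$, this forces $|V(P)| = 1$. Write $V(P) = \{\gamma\}$.

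Next I would establish the inclusion $P \subseteq M_\gamma$. For any $f \in P$ we have $\gamma \in V(P) = \bigcap_{h \in P} V(h) \subseteq V(f)$, hence $f(\gamma) = 0$, i.e. $f \in M_\gamma$. The inclusion is strict because $M_\gamma$ is maximal while $P$ is not, so the two ideals cannot coincide; thus $P \subsetneq M_\gamma$.

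For uniqueness, suppose $P \subseteq M$ for some maximal ideal $M$. By the classification of maximal ideals, $M = M_{\gamma'}$ for some $\gamma' \in [0,1]$, and $V(M_{\gamma'}) = \{\gamma'\}$. Monotonicity of $V$ under inclusion gives $\{\gamma'\} = V(M_{\gamma'}) \subseteq V(P) = \{\gamma\}$, so $\gamma' = \gamma$ and $M = M_\gamma$. Hence $M_\gamma$ is the unique maximal ideal containing $P$.

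I do not expect a genuine obstacle here: the theorem has already done the real work of pinning down $V(P)$ to a single point, and the corollary is essentially the remark that a one-point zero set both places $P$ inside $M_\gamma$ and forbids it from lying inside any $M_{\gamma'}$ with $\gamma' \neq \gamma$. The only point requiring care is to invoke, rather than reprove, that $V(P)$ is nonempty — this is where unitality of $R$ enters — so that the degenerate case $V(P) = \emptyset$, also excluded by the theorem, causes no trouble.
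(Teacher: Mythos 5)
Your proposal is correct and follows essentially the same route as the paper: the paper's (largely implicit) argument is exactly to use the theorem to eliminate $|V(P)| = 0$ and $|V(P)| \geq 2$, conclude $V(P) = \{\gamma\}$, and read off containment in $M_\gamma$ and uniqueness from the classification of maximal ideals and the order-reversing property of $V$. You merely spell out the details (the inclusion $P \subseteq M_\gamma$, strictness, and the monotonicity argument for uniqueness) that the paper leaves to the reader.
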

In conclusion, we remark that we just showed the existence of non maximal prime ideals and located them in some sense. It would be a good follow up if someone can come up with a constructive proof of a non maximal prime ideal in this ring. One can prove that the maximal ideals of $C[0,1]$ are actually unaccountably generated [$\textbf{3}$, p. 404] and it is probably difficult to come up with an explicit set of generators for these ideals. Therefore, it doesn't seem a very easy job to find a generating set for non maximal prime ideals in this ring.

\end{document}